\documentclass[12pt,letterpaper]{amsart}
\usepackage{sty_equiv_GW}

\begin{document}
\title{A Quadratically Enriched Pushforward for Deligne--Mumford Stacks}

\author[Bethea]{Candace Bethea}\author[Wickelgren]{Kirsten Wickelgren}

\begin{abstract}
We define a quadratically enriched pushforward for proper morphisms between smooth, proper Deligne--Mumford stacks over an arbitrary field using Grothendieck--Serre duality. We apply this pushforward to define quadratically enriched Euler classes and Euler numbers of relatively oriented vector bundles on smooth, proper Deligne--Mumford stacks. Finally, we define quadratically enriched fundamental classes for smooth, proper schemes over an arbitrary field, and we use these to propose a definition of quadratically enriched quantum $K$-invariants when the moduli space of stable maps is smooth, tame, and relatively oriented.
\end{abstract}
\maketitle 

\begingroup
\setlength{\parskip}{3pt}
\renewcommand{\baselinestretch}{0.25}\normalsize
\tableofcontents
\renewcommand{\baselinestretch}{1.0}\normalsize
\endgroup

\section{Introduction}\label{section:intro}

Quadratically enriched enumerative geometry has been successful in extending classical enumerative results that were once only attainable over $\C$ and $\R$ to arbitrary fields by producing enriched counts, valued in the Grothendieck--Witt ring of quadratic forms over a general field $k$. Initially spearheaded by Hoyois, Kass--Wickelgren, and Levine \cite{Hoyois-lefschetz, KW-local-degree, levineaspects}, examples of quadratically enriched  counts of classical enumerative problems include \cite{KW-local-degree, 27lines, Hoyois-lefschetz, SW-4-lines, quadratic-tropical, quadratic-twisted-cubics,KLSW-rational-curves}. See \cite{bb-history} for an in-depth expository review of results in enriched enumerative geometry more generally, with an emphasis on quadratically enriched developments. 

The homotopical invariants used in quadratically enriched enumerative geometry up to this point, including various definitions of the $\mathbb{A}^1$-degree, the enriched Euler characteristic, and the enriched Euler number \cite{KW-local-degree, KLSW-rational-curves, BW-Euler, levine-raskit}, have primarily been defined for schemes using six functor formalisms such as that due to Ayoub, Hoyois, and others \cite{ayoub-1, Hoyois6ff}. Six functor formalisms for certain sufficiently nice stacks exist, such as those due to Hoyois and Khan--Ravi \cite{Hoyois6ff, khan-ravi}, and have recently been used in equivariant motivic enriched enumerative results, see \cite{bethea-ravi}. 

This work defines a quadratically enriched pushforward for proper maps between smooth, proper Deligne--Mumford stacks with projective coarse moduli spaces over any field $k$, which is used to define a quadratically enriched Euler number of a vector bundle on a smooth, proper Deligne--Mumford stack. We do this by defining a naive bilinear form pushforward and composing with a canonical map to the Grothendieck--Witt ring of a field. The key construction of the quadratically enriched pushforward is as follows: 

\begin{theorem}\label{thm:intro_pushforward}
    Let $f: X \to Y$ be a morphism between  Deligne--Mumford stacks $X$ and $Y$, which are proper and smooth over $\Spec k$ of dimension $d_X$ and $d_Y$ respectively. Assume $X$ and $Y$ have projective coarse moduli spaces. Then for any invertible sheaf $\calL$ on $Y$ there is a pushforward 
\[
f_*: \Bl^{\naive}(X,f^*\calL \otimes \omega_f [d_f+n]) \to \Bl^{\naive}(Y,\calL[n])
\] defined by sending \[
\beta: C_* \otimes C_* \to f^*\calL \otimes \omega_f [d_f+n]
\] in $\Bl^{\naive}(X,\calL \otimes \omega_f [d_f+n])$ to 
\[
 Rf_* C_* \otimes  Rf_* C_* \to  Rf_*(C_* \otimes C_*) \xrightarrow{Rf_*\beta} Rf_* (f^*\calL \otimes \omega_f [n+d_f]) \simeq \calL \otimes Rf_*(\omega_f [n+d_f]) \xrightarrow{\Tr_f} \calL[n].
\] 
\end{theorem}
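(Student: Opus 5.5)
The strategy is to check that each arrow in the displayed composite exists in the derived category of $Y$ (more precisely in $D^b_{\mathrm{coh}}$ or the perfect complexes), and then that the composite is again a naive bilinear form, i.e.\ an object of $\Bl^{\naive}(Y,\calL[n])$. I take $\Bl^{\naive}(X,\mathcal{M})$ to consist of pairs $(C_*,\beta)$, where $C_*$ is a perfect complex on $X$ and $\beta$ is a symmetric morphism $C_*\otimes C_*\to \mathcal{M}$. The adjoint $C_*\to R\mathcal{H}om(C_*,\mathcal{M})$ should be an isomorphism if non-degeneracy is part of the definition. The check has five steps.

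\textbf{Step 1: $Rf_*$ preserves perfect complexes.} Since $X$ and $Y$ are smooth over $k$, perfect complexes coincide with bounded coherent complexes. Because $f$ is proper between tame-or-not Deligne--Mumford stacks with projective coarse spaces, $Rf_*$ sends $D^b_{\mathrm{coh}}(X)$ to $D^b_{\mathrm{coh}}(Y)$. In positive characteristic we may need cohomological finiteness; $Rf_*$ is still bounded, since a proper DM stack has finite cohomological dimension for quasi-coherent sheaves up to finite-group contributions. If not, I would restrict to the tame case or invoke Olsson/Faltings finiteness. Hence $Rf_*C_*$ is perfect.

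\textbf{Step 2: the lax monoidal map and the projection formula.} The map $Rf_*C\otimes Rf_*C\to Rf_*(C\otimes C)$ comes from adjunction, applied to $f^*(Rf_*C\otimes Rf_*C)\cong f^*Rf_*C\otimes f^*Rf_*C\to C\otimes C$ via the counit. The identification $Rf_*(f^*\calL\otimes \omega_f[d_f+n])\simeq \calL\otimes Rf_*\omega_f[d_f+n]$ is the projection formula, which holds for $\calL$ invertible.

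\textbf{Step 3: the trace map.} The trace $\Tr_f\colon Rf_*\omega_f[d_f]\to \mathcal{O}_Y$ is the counit of Grothendieck--Serre duality $Rf_*\dashv f^!$ for proper representable-or-not morphisms of DM stacks (Nironi; Hall--Rydh). This uses $f^!\mathcal{O}_Y\simeq \omega_f[d_f]$, where $\omega_f=\omega_X\otimes f^*\omega_Y^{-1}$ and $d_f=d_X-d_Y$, a consequence of smoothness of $X$ and $Y$ over $k$.

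\textbf{Step 4: symmetry.} The lax monoidal structure is compatible with the swap map, so the symmetry of $\beta$ transfers to the new form.

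\textbf{Step 5: non-degeneracy (the main obstacle).} We must show that the adjoint $Rf_*C\to R\mathcal{H}om(Rf_*C,\calL[n])$ is an isomorphism. Using duality and the projection formula,
\[R\mathcal{H}om(Rf_*C,\calL[n])\simeq Rf_*R\mathcal{H}om(C,f^!\calL[n])\simeq Rf_*R\mathcal{H}om(C,f^*\calL\otimes\omega_f[d_f+n]).\]
It then remains to verify that the adjoint of the pushed-forward form equals $Rf_*$ applied to the adjoint of $\beta$, followed by this duality isomorphism. This is a diagram chase using the compatibility of the duality isomorphism with the trace; it holds because the duality isomorphism is defined by composing with the trace. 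It follows that the adjoint is an isomorphism, since the adjoint of $\beta$ is.

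The most delicate point is the existence of the duality isomorphism $Rf_*R\mathcal{H}om(-,f^!)\simeq R\mathcal{H}om(Rf_*-,-)$ for DM stacks in arbitrary characteristic. I would cite Nironi's Grothendieck duality for DM stacks with projective coarse moduli, which is exactly why the hypothesis that the coarse moduli spaces are projective appears. I would also note that the pushforward is well-defined on isometry classes, since $Rf_*$ is a functor.
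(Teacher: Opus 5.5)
Your proposal is correct and follows essentially the same route as the paper. In both, the form is built from lax monoidality and the projection formula, $Rf_*C_*$ is bounded coherent, and unimodularity comes from identifying the adjoint of $f_*\beta$ with $Rf_*\beta^\dagger$ followed by the Serre duality isomorphism; the paper pulls $\calL[n]$ out by the projection formula and applies duality with target $\calO_Y$, whereas you apply duality with target $\calL[n]$ via $f^!(\calL[n])\simeq f^*\calL\otimes\omega_f[d_f+n]$, which is equivalent. One caution: your Step 1 claim that $Rf_*$ stays bounded without tameness is false. For example, in characteristic $p$ one has $H^i(B\mathbb{Z}/p,\calO)=k$ for all $i\geq 0$. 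So your fallback to the tame case is not optional; it is exactly the standing hypothesis the paper imposes in Section~\ref{section:pshforward} and in Theorem~\ref{thm:pushforward}.
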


See Theorem \ref{thm:pushforward} for the proof. This is a pushforward between naive bilinear form rings rather than Grothendieck--Witt rings, defined as follows. For an invertible sheaf $\calL$ on $X$ and integer $n$,  $\Bl^{\naive}(X,\calL[n])$ denotes the monoid of isomorphism classes of symmetric bilinear forms
\[
\beta: C_* \otimes C_* \to \calL[n] 
\]  under $\oplus$, with $C_*$ in $D_c^\flat(X)$, which are unimodular. See Definition \ref{bl_naive}. 

Constructing a pushforward in naive bilinear form groups 
allows us to use Grothendieck--Serre duality for Noetherian, separated Deligne--Mumford stacks that are finite type over $k$ \cite{lev,nironi}. This is opposed to using a Gysin pushforward belonging to a more sophisticated, fully-fledged six functor formalism for Deligne--Mumford stacks, see \cite{khan-ravi}, which typically requires a representability hypothesis that we wish to avoid for the specific purposes of this paper. 

While the pushforward in Theorem \ref{thm:intro_pushforward} does not fit into a six functor formalism, it is of use in quadratically enriched enumerative geometry. Many quadratically enriched results of interest are valued in $\GW(k)$, and there is a canonical map $\theta\colon \Bl^{\naive}(k, \calO_k)\to \GW(k)$. Thus we may pushforward to $\Spec k$ in the naive bilinear form theory to obtain a naive enriched count, then apply $\theta$ to obtain a quadratically enriched enumerative result in $\GW(k)$. See Definition \ref{df:BlnaivektoGW} for the definition of the canonical map $\theta\colon \Bl^{\naive}(k, \calO_k)\to \GW(k)$.

The pushforward constructed in Theorem \ref{thm:intro_pushforward} allows us to define a quadratically enriched Euler class and Euler number for any relatively oriented vector bundle on a smooth, proper Deligne--Mumford stack over $k$. See Definitions \ref{def:naive_euler} and \ref{def:euler_number} respectively. These Euler numbers also define a Witt invariant in the sense of Serre \cite{GMS}, giving  quadratic enrichments of the Euler characteristics of $\MgnbarXb$ and $\Mgnbar$ when they are smooth, tame, and have projective coarse moduli spaces for a given smooth, proper $X$ over $k$ and $\beta \in H_2(X,\Z)$. Finally, given a smooth, projective, oriented scheme $X$ over $k$ and closed subscheme $Z$, we define quadratically enriched fundamental class of $Z$. Using this and the pushforward, we propose a definition for quadratically enriched quantum $K$-invariants under orientation hypotheses, building on the quantum $K$-invariants of Y.P. Lee \cite[Section 4.2]{lee-quantum-invariants}. See Definition \ref{defn:smooth_quantum_K}. This is certainly a substantial assumption, as orienting $\MgnbarXb$ is a considerable feat even when we do not work over arbitrary fields \cite{solomon-thesis}.

\subsection{Acknowledgments} We gratefully acknowledge helpful discussions with Ruijie (Jared) Zhang (especially for Section ~\ref{Subsection:FundClassSubLNew}), Jack Hall,  Charanya Ravi, and Marc Levine. We discussed related questions and searched the literature with AI. In this paper, AI was not used to generate ideas for the constructions or prove results. Candace Bethea was supported by National Science Foundation Award DMS-2402099. Kirsten Wickelgren was supported by National Science Foundation Award DMS-2405191. We warmly thank the Simons Laufer Mathematical Sciences Institute (SLMath) for providing a welcoming and productive environment; this material is based upon work supported by the National Science Foundation Grant no. DMS-2424139 while the authors were in residence at SLMath in Berkeley, California during the Fall 2026 semester. 

\section{Construction of the Pushforward}\label{section:pshforward}

Let $k$ be a field. 
 $X$ and $Y$ will always be Noetherian, separated Deligne--Mumford stacks that are tame and finite type over $k$ with projective coarse moduli spaces. 

Let $D_{qc}(X)$ denote the derived category of complexes of quasi-coherent sheaves on $X$, for a construction and general properties see \cite[Section 1]{hall-rydh}. Let $D^+(X)$ denote the derived category of complexes which are bounded below, and $D^\flat(X)$ denote the derived category of bounded complexes. Let $D_c^+(X)$ denote the derived category of bounded below complexes of quasi-coherent sheaves with coherent cohomology, and let $D^{\flat}_c(X)$ denote the derived category of bounded complexes of quasi-coherent sheaves with coherent cohomology. 
Let $D_{\perf}(X)$ denote the derived category of perfect complexes on $X$. 

We recall the results on Serre duality for Deligne--Mumford stacks from \cite{nironi} that will be used in this work. See also work of \cite{lev, hall-rydh} for duality in other contexts. Let $f: X \to Y$ be a proper morphism. Since $f$ is proper, $f$ is separated and quasi-compact. 
Thus by \cite[Theorem 1.16]{nironi} and \cite[Proposition 1]{lev}
\[
Rf_*: D^+(X) \to D^+(Y)
\] admits a right adjoint $f^!$. In particular, there is a trace map 
\[
\Tr_f: Rf_* f^! \calG \to \calG
\] for any $\calG$ in $D^+(Y)$. We obtain an induced map
\begin{equation}\label{eq:SD}
Rf_* R\intHom (\calF, f^! \calG) \to R\intHom(Rf_* \calF, \calG)
\end{equation} by the composition of the canonical map $Rf_* R\intHom (\calF, f^! \calG) \to R\intHom (Rf_* \calF, Rf_* f^! \calG)$ with the trace. Serre duality for stacks is the statement that \eqref{eq:SD} is an equivalence \cite[Corollary 2.10]{nironi} for $\calF$ in $D_c^+(X)$. While we only need an adjoint $f^!$ for the bounded $Rf_*\colon D^+(X)\to D^+(Y)$, there is an adjunction for unbounded $R(_{qc})_*\colon D_{qc}(X)\to D_{qc}(Y)$, see \cite[Theorem 4.14]{hall-rydh} for concentrated morphisms.

Suppose $\pi_Y: Y \to \Spec k$ is tame, proper, and smooth of dimension $d_Y$ with a projective coarse moduli space. Then $\pi_Y^! \calO_k \simeq \omega_Y [d_Y]$ by \cite[Theorem 2.22]{nironi}, where $\omega_Y$ is the canonical sheaf of $Y$, which is invertible. When $Y$ has a projective coarse moduli space, this is also \cite[Theorem 1]{lev}.
\begin{lemma}\label{lem:triangle}
    Let $f: X \to Y$ be a smooth morphism between tame Deligne--Mumford stacks that are proper and smooth over $\Spec k$ of dimension $d_X$ and $d_Y$ respectively. Assume $X$ and $Y$ have projective coarse moduli spaces. Then $f^! \calO_Y \simeq \omega_f [d_f]$ for an invertible sheaf $\omega_f$ on $X$ and $d_f=d_X -d_Y$. 
\end{lemma}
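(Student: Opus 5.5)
The plan is to compute $f^!\calO_Y$ by factoring the structure morphism $\pi_X = \pi_Y \circ f$, and then to compare with the known dualizing complexes of $X$ and $Y$ over $\Spec k$. Right adjoints compose, so from $R\pi_{X*} = R\pi_{Y*}\circ Rf_*$ on $D^+$ we get a natural isomorphism $\pi_X^! \simeq f^!\pi_Y^!$. By \cite[Theorem 2.22]{nironi} (or \cite[Theorem 1]{lev}), $\pi_X^!\calO_k \simeq \omega_X[d_X]$ and $\pi_Y^!\calO_k\simeq \omega_Y[d_Y]$, with $\omega_X$ and $\omega_Y$ invertible. Therefore
\[
\omega_X[d_X] \simeq f^!(\omega_Y[d_Y]) \simeq f^!(\omega_Y)[d_Y].
\]

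The main step is a projection-type formula: $f^!(\calE) \simeq f^!\calO_Y \otimes Lf^*\calE$ for $\calE$ perfect, and in particular for $\calE$ invertible. I would construct the comparison map $f^!\calO_Y\otimes Lf^*\calE \to f^!\calE$ as the adjoint of
\[
Rf_*(f^!\calO_Y\otimes Lf^*\calE)\simeq Rf_*f^!\calO_Y\otimes \calE \xrightarrow{\Tr_f\otimes 1}\calE .
\]
Here the first isomorphism is the usual projection formula for perfect $\calE$. This formula holds for the tame (hence concentrated) morphism $f$; see \cite{hall-rydh}.

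To check that the comparison map is an isomorphism, I would use Yoneda. Since $\calE$ is invertible, one has
\[
\Hom(\calF, f^!\calO_Y\otimes Lf^*\calE)\simeq \Hom(\calF\otimes Lf^*\calE^\vee, f^!\calO_Y)\simeq \Hom(Rf_*\calF\otimes\calE^\vee,\calO_Y)\simeq\Hom(Rf_*\calF,\calE).
\]
The middle step uses adjunction together with the projection formula again. Verifying that this chain of isomorphisms is induced by the map constructed above is the only real bookkeeping, and it reduces to compatibility of the trace with the projection formula. Alternatively, since $\calE$ is locally free, one could reduce to $\calE=\calO_Y$ étale locally; but $f^!$ is not obviously local, so I prefer the global Yoneda argument.

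Combining the two steps gives $\omega_X[d_X]\simeq f^!\calO_Y\otimes f^*\omega_Y[d_Y]$. Hence
\[
f^!\calO_Y \simeq \omega_X\otimes f^*\omega_Y^{\vee}[d_X-d_Y].
\]
Set $\omega_f := \omega_X\otimes f^*\omega_Y^\vee$. It is a tensor product of invertible sheaves, hence invertible, and $d_f=d_X-d_Y$. This proves the statement. Note that smoothness of $f$ is used only to make the shift the relative dimension and to identify $\omega_f$ with $\det\Omega_f$; the argument above needs only that $f$ be proper and that $X$ and $Y$ be smooth over $k$.

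The expected obstacle is the projection formula for $f^!$ on stacks. This is exactly where representability issues could arise, and I would rely on the unbounded adjunction for concentrated morphisms in \cite[Theorem 4.14]{hall-rydh}, which covers tame Deligne--Mumford stacks.
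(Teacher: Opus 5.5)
Your proof is correct and follows essentially the same route as the paper. Both arguments factor $\pi_X=\pi_Y\circ f$ to get $\omega_X[d_X]\simeq f^!(\omega_Y)[d_Y]$, untwist using $f^!(\omega_Y)\simeq f^*\omega_Y\otimes f^!\calO_Y$, and solve for $f^!\calO_Y$. The paper instead cites Hartshorne's \emph{Residues and Duality}, a scheme-theoretic reference, for the untwisting isomorphism, and it identifies $\omega_X\otimes f^*\omega_Y^{-1}$ with $\det\Omega_f$ via the cotangent sequence, which uses smoothness of $f$. Your Yoneda argument proves the untwisting directly on stacks, and your definition $\omega_f:=\omega_X\otimes f^*\omega_Y^{\vee}$ shows that smoothness of $f$ is not actually needed.
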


\begin{proof}
   
        Let $\pi_X\colon X\to \Spec k$ and $\pi_Y\colon Y\to \Spec k$ be the structure maps for $X$ and $Y$ respectively so that $\pi_X = \pi_Yf$. 
        In particular, 
        \[
        \omega_X[d_X]\simeq \pi^!_X\calO_k \simeq f^!\pi_Y^! \calO_k \simeq f^!(\omega_Y[d_Y]) \simeq f^!\omega_Y[d_Y]. 
        \]
        Thus $f^!\omega_Y\simeq\omega_X[d_f]$. Since $f^!\omega_Y\simeq f^*\omega_Y\otimes f^!\calO_Y$ \cite[Ideal Theorem a.2]{hartshorne_residues_1966}, $f^!\calO_Y\simeq f^!\omega_Y\otimes f^*\omega_Y^{-1}$. Taking the determinant of the short exact sequence 
        \[
        0\to f^*\Omega_Y \to \Omega_X \to \Omega_f\to 0
        \]
yields the isomorphism $\omega_f \cong \omega_X\otimes f^*\omega_Y^{-1}$ as line bundles. Thus 
        \[
        f^!\calO_Y\simeq f^!\omega_Y\otimes f^*\omega_Y^{-1} \simeq \omega_X[d_f]\otimes f^*\omega_Y^{-1} \simeq \omega_X\otimes f^*\omega_Y^{-1}[d_f] \simeq \omega_f[d_f], 
        \]
        finishing the proof. 
\end{proof}

\begin{definition}\label{bl_naive} For an invertible sheaf $\calL$ on $X$ and integer $n$, let $\Bl^{\naive}(X,\calL[n])$ denote the monoid of isomorphism classes of symmetric bilinear forms
\[
\beta: C_* \otimes C_* \to \calL[n] 
\] under $\oplus$, with $C_*$ in $D_c^\flat(X)$, which are unimodular in the sense that the adjoint map
\[
C_* \to R\intHom(C_*,\calL[n] )
\] is an isomorphism. Define $\Bl^{\naive}_{\mathrm{perf}}(X,\calL[n])$ analogously, with the requirement that $C_*$ is in $D_{\mathrm{perf}}(X)$. See \cite[Section 2.2]{BW-Euler}. 
\end{definition}

Given a morphism $f\colon X\to Y$ as in Lemma \ref{lem:triangle}, our pushforward (Theorem \ref{thm:pushforward}) will be valued in naive bilinear form groups. We ultimately seek to obtain quadratically enriched enumerative results valued in the Grothendieck--Witt ring of $k$, $ \GW(k)$, and we do so by composing the pushforward with the map $\theta$ defined below. 

\begin{definition}\label{df:BlnaivektoGW}
There is a canonical map $\theta\colon \Bl^{\naive}(k, \calO_k) \to \GW(k)$ defined by taking 
\[
\beta: C_* \otimes C_* \to k
\] to $\sum_i (-1)^i \beta_i$, where  each 
\[
\beta_i: V_i \otimes V_i \to k 
\] is defined to be the pairing induced by $\beta$ on 
\[
V_i =
\begin{cases}
(H_i(C_*) \oplus H_{-i}(C_*)), \text{if } i \neq 0 \\
H_0(C_*), \text{if } i = 0. 
\end{cases}
\]

\end{definition}

Serre duality for Deligne--Mumford stacks constructs the following pushforward. See also \cite{CH09}, \cite[Section 2]{KLSW-rational-curves} for analogous constructions in the context of schemes or scheme theoretic loci in stacks.

\begin{theorem}\label{thm:pushforward}
    Let $f: X \to Y$ be a morphism between tame Deligne--Mumford stacks that are proper and smooth over $\Spec k$ of dimension $d_X$ and $d_Y$ respectively, assume $X$ and $Y$ have projective coarse moduli spaces. Then for any invertible sheaf $\calL$ on $Y$ there is a pushforward 
\[
f_*: \Bl^{\naive}(X,f^*\calL \otimes \omega_f [d_f+n]) \to \Bl^{\naive}(Y,\calL[n])
\] defined by sending \[
\beta: C_* \otimes C_* \to f^*\calL \otimes \omega_f [d_f+n]
\] in $\Bl^{\naive}(X,\calL \otimes \omega_f [d_f+n])$ to 
\[
 Rf_* C_* \otimes  Rf_* C_* \to  Rf_*(C_* \otimes C_*) \xrightarrow{Rf_*\beta} Rf_* (f^*\calL \otimes \omega_f [n+d_f]) \simeq \calL \otimes Rf_*(\omega_f [n+d_f]) \xrightarrow{\Tr_f} \calL[n].
\] 
\end{theorem}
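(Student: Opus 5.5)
To prove Theorem \ref{thm:pushforward}, I will check three things about the displayed composite, which I denote $f_*\beta$. First, $f_*\beta$ is an object of the right kind: $Rf_*C_*$ lies in $D^\flat_c(Y)$ and the pairing is symmetric. Second, $f_*\beta$ is unimodular. Third, the assignment is compatible with isomorphisms and with $\oplus$, so it gives a map of monoids. The map $Rf_*C_*\otimes Rf_*C_*\to Rf_*(C_*\otimes C_*)$ is the lax monoidal structure of $Rf_*$, adjoint to the counit $f^*Rf_*C_*\otimes f^*Rf_*C_*\to C_*\otimes C_*$. The isomorphism $Rf_*(f^*\calL\otimes\omega_f[n+d_f])\simeq\calL\otimes Rf_*(\omega_f[n+d_f])$ is the projection formula for the invertible sheaf $\calL$. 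The trace $\Tr_f$ comes from Lemma \ref{lem:triangle}, which gives $f^!\calO_Y\simeq\omega_f[d_f]$. Strictly, that lemma is stated for smooth $f$; for general proper $f$ one should instead set $\omega_f[d_f]:=f^!\calO_Y$, which is $\omega_X[d_X]\otimes f^*\omega_Y^{-1}[-d_Y]$ by the same computation as in its proof. I will remark this and proceed.

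Coherence and boundedness are standard. $f$ is proper, because $X$ is proper and $Y$ is separated. $X$ is tame, so its coarse space map $X\to X^{c}$ is cohomologically exact and preserves coherence, and proper pushforward on the algebraic spaces $X^{c}\to Y^{c}$ has finite cohomological dimension. Hence $Rf_*$ sends $D^\flat_c(X)$ to $D^\flat_c(Y)$. Symmetry follows from naturality of the lax monoidal structure with respect to the swap map $C_*\otimes C_*\to C_*\otimes C_*$, since the remaining maps are applied after $Rf_*\beta$. Isomorphic forms go to isomorphic forms by functoriality. Additivity holds because $Rf_*$ is additive and the lax structure on a direct sum decomposes into the diagonal blocks, with cross terms zero.

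The main step is unimodularity. The adjoint of $f_*\beta$ factors as
\[
Rf_*C_* \xrightarrow{Rf_*(\mathrm{ad}\,\beta)} Rf_*R\intHom(C_*, f^*\calL\otimes\omega_f[d_f+n]) \simeq Rf_*R\intHom(C_*, f^!(\calL[n])) \to R\intHom(Rf_*C_*,\calL[n]).
\]
The first arrow is an isomorphism because $\beta$ is unimodular. The middle identification uses $f^!(\calL[n])\simeq f^*\calL\otimes f^!\calO_Y[n]$, the same fact from \cite{hartshorne_residues_1966} that was used in Lemma \ref{lem:triangle}. The last arrow is the Serre duality map \eqref{eq:SD}, and it is an isomorphism for $C_*\in D^+_c(X)$ by \cite[Corollary 2.10]{nironi}.

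The obstacle I expect is verifying that this composite really equals the adjoint of the displayed pairing. Concretely, I have to check that the trace on $Rf_*f^!(\calL[n])$ corresponds, under the projection formula, to $\calL\otimes\Tr_f$ for $\calO_Y$. I would do this by using the adjunction $Rf_*\dashv f^!$ to reduce to the counit. I would then write the map \eqref{eq:SD} as the adjoint of the evaluation pairing $Rf_*R\intHom(C_*,f^!\calG)\otimes Rf_*C_*\to Rf_*f^!\calG\to\calG$ and compare it diagram-by-diagram with $f_*\beta$. The remaining inputs are routine compatibilities of the lax monoidal structure with the projection formula.
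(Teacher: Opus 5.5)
Your proposal is correct and follows essentially the same route as the paper: the adjoint of $f_*\beta$ is $Rf_*\beta^{\dagger}$ followed by the Serre duality map \eqref{eq:SD}, so $f_*\beta$ is unimodular. You absorb $\calL[n]$ into $f^!$ via $f^!(\calL[n])\simeq f^*\calL\otimes f^!\calO_Y[n]$, whereas the paper pulls $\calL[n]$ out by projection formulas and dualizes against $\calO_Y$; this amounts to the same thing. Your extra remarks are sensible refinements of points the paper passes over by citation: reading $\omega_f[d_f]$ as $f^!\calO_Y$ when $f$ is not smooth, and deriving boundedness of $Rf_*C_*$ from tameness rather than citing Laumon--Moret-Bailly. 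The tameness argument, however, should be run \'etale-locally on $Y$, after base change to an atlas, rather than through the coarse map $X^c\to Y^c$. That coarse map only controls the further pushforward to $Y^c$, and pushforward to $Y^c$ does not detect coherence or boundedness on $Y$.
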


\begin{proof}
The morphism $Rf_* C_* \otimes  Rf_* C_* \to  Rf_*(C_* \otimes C_*)$ comes from the lax monoidal structure of $Rf_*$. 

Since $X$, and $Y$ are proper, so is $f$. Thus $ Rf_* C_*$ is in $D^{\flat}_c(Y)$ by \cite[Theorem 15.6.iv]{laumon-moret-bailly}. 
    Let $\beta^{\dagger}: C_* \to \intHom(C_*, f^*\calL \otimes \omega_f [d_f+n])$ be the isomorphism adjoint to $\beta$. It follows that $Rf_* \beta^{\dagger}$ is also an isomorphism. The map $ Rf_* C_*  \to \intHom(Rf_* C_*, \calL[n]) $ adjoint to $f_* \beta$ is the composition of $Rf_* \beta^{\dagger}$ with
    \begin{align*}
    Rf_*\intHom(C_*, f^*\calL \otimes \omega_f [d_f+n]) & \simeq \calL[n] \otimes Rf_*\intHom(C_*, \omega_f [d_f]) \\ &\xrightarrow{\simeq} \calL [n] \otimes \intHom(C_*, \calO_Y) \simeq \intHom(C_*,\calL [n])
    \end{align*}
    where the second equivalence is Serre duality \eqref{eq:SD}, $d_f = d_X-d_Y$, and the first and third are projection formulas. It follows that $f_* \beta$ is unimodular. It is clear that $f_* \beta$ is symmetric when $\beta$ is. 
\end{proof}

We also define the pushforward along a morphism that is relatively oriented.

\begin{definition}\label{defn:relatively_oriented}
   Suppose $f\colon X\to Y$ is a quasi-smooth morphism of Deligne--Mumford stacks over $\Spec(k)$, and let $\omega_f \to X$ denote the relative canonical sheaf. Let $\calL'\to Y$ be a line bundle on $Y$. We say $f$ is \emph{relatively oriented with respect to $\calL'$} if there exists an isomorphism 
   \begin{equation}\label{eq:rel_orient_eq}
   \omega_f\cong \calL^{\otimes 2} \otimes f^*\calL'
   \end{equation}
   for some line bundle $\calL\to X$. We call $\calL$ the \emph{orienting line bundle} and the isomorphism \eqref{eq:rel_orient_eq} the \emph{orientation} of $f$. We say that $X$ is \emph{oriented} or \emph{relatively oriented} if the structure map $\pi_X\colon X\to \Spec k$ is relatively oriented, necessarily with respect to the trivial bundle on $\Spec k$. When $i\colon Z\hookrightarrow X$ is the inclusion of a closed subscheme, we say $Z$ is relatively oriented with respect to $\calL'\to X$ when $i$ is. 
\end{definition}

Suppose $f$ is relatively oriented, quasi-smooth, and additionally satisfies the hypotheses of Theorem \ref{thm:pushforward}. There is an oriented pushforward obtained by composing the pushforward of Theorem \ref{thm:pushforward} with the orientation $\rho$. 

\begin{definition}\label{def:oriented_pushforward} 
Let $f\colon X\to Y$ be morphism of tame Deligne--Mumford stacks that are proper and smooth over $\Spec k$ of dimensions $d_X$ and $d_Y$ respectively, with projective coarse moduli spaces. Let $d_f = d_X-d_Y$ and suppose $f$ is relatively oriented with respect to $\calL'\to Y$ by 
\[
\omega_f\stackrel{\rho}{\cong} \calL^{\otimes 2} \otimes f^*\calL'.
\]
The \emph{oriented pushforward of $f$}, denoted by $f^{\orient}_*$ is given by the composition 
\begin{align}\label{eq:oriented_pshfwd}
\Bl^{\naive}\left(X,f^*\calL'[d_f+n]\right) &\cong \Bl^{\naive}\left(X, (f^*\calL'\otimes \calL^{\otimes 2})[d_f+n]\right) \\
&\stackrel{\rho}{\cong} \Bl^{\naive}\left(X, \omega_f[d_f+n]\right) \stackrel{f_*}{\longrightarrow} \Bl^{\naive}\left(Y, \calO[n]\right), \notag
\end{align} where the first isomorphism is  $\beta\mapsto \beta\otimes \calL$ 
and $f_*$ is the pushforward from Theorem \ref{thm:pushforward}. 
\end{definition}

As is the case for the unoriented pushforward, the oriented pushforward can also be twisted by the pullback of another line bundle $\calM\to Y$, 
\begin{align}\label{eq:twisted_oriented_pushforward}
\Bl^{\naive}\left(X,f^*\calM\otimes f^*\calL'[d_f+n]\right) &\cong \Bl^{\naive}\left(X, (f^*\calM\otimes f^*\calL'\otimes \calL^{\otimes 2})[d_f+n]\right) \\
&\stackrel{\rho}{\cong} \Bl^{\naive}\left(X, f^*\calM\otimes \omega_f\otimes [d_f+n]\right) \stackrel{f_*}{\longrightarrow} \Bl^{\naive}\left(Y,  \calM[n]\right), \notag
\end{align}
Note that $f^{\orient}_*$ depends on a choice of orientation, and also on another choice of line bundle $\calM\to Y$ in the twisted case.

    Note finally that for an invertible sheaf $\calL$ on $Y$ and $f: X \to Y$ a morphism, there is a pullback
\begin{equation}\label{eq:pullbackBlnaive}
f^*: \Bl^{\naive}_{\mathrm{perf}}(Y, \calL[n]) \to \Bl^{\naive}_{\mathrm{perf}}(X, f^*\calL[n])
\end{equation} defined by 
\[
\left[C_*\otimes C_* \stackrel{\beta}{\longrightarrow} \calL[n]\right] 
\mapsto 
\left[ f^*C_*\otimes f^*C_* \stackrel{f^*\beta}{\longrightarrow} f^*\calL[n]\right].
\]

\section{Quadratically Enriched Euler Numbers for Deligne--Mumford stacks} \label{section:GW_invaraints}

\subsection{Quadratically Enriched Euler numbers}
Let $\pi_X: X \to \Spec k$ be a smooth, proper, tame Deligne--Mumford stacks of dimension $d_X$. 

Let $V$ be a locally free sheaf on $X$ of rank $r$. Let $\sigma\colon X\to V$ be any section. 

\begin{definition}\label{def:naive_euler}
    Define the {\em Euler class} of $V$, denoted $e(V, \sigma) \in \Bl^{\naive}(X, \det V [r] )$, to be the bilinear form represented by 
    \[
    \Kos (V) \otimes  \Kos (V)  \to  \det V [r]
    \] on the Koszul complex of $V $,
    \begin{align*}
    & \Kos (V) =\wedge^{r}V^* \to \wedge^{r-1}V^* \to \ldots \wedge^2 V^* \to V^* \to \mathcal{O}_{X}
    \end{align*} in degrees $[0,r]$, defined by the canonical maps
    \[
    \wedge^{i}V^* \otimes \wedge^{r-i}V^* \to \wedge^{r}V^*.
    \]
\end{definition}

We say that $V$ is {\em relatively oriented} by the data of a line bundle $\calL$ on $X$ and isomorphism $\rho: \calL^{\otimes 2} \otimes \det V \xrightarrow{\cong} \omega_X$. When $V\to X$ is relatively oriented by $\calL$ and $\rho$, we can define the quadratically enriched Euler number of $V$. 

\begin{definition}\label{def:euler_number}
Assume $V\to X$ is a relatively oriented vector bundle and $r = d_X$. Let $\pi_X\colon X\to \Spec k$ be the structure map for $X$. Assume $X$ has a projective coarse moduli space. Define the \emph{quadratically enriched Euler number} of $V$, $n(V,\sigma)$, to be the image of $e(V,\sigma)$ under the composition 
\[
 \Bl^{\naive}(X, \det V[r]) \cong \Bl^{\naive}(X, \omega_X [d_X]) \stackrel{(\pi_X)_*}{\to} \Bl^{\naive}(k)\stackrel{\Theta}{\to} \GW(k)
\]
where where the first isomorphism is given by the orientation data $\rho$, $\theta$ is defined in Definition \ref{df:BlnaivektoGW}, and $(\pi_X)_*$ is defined in Defition~\ref{thm:pushforward}. 
\end{definition}

\begin{remark}
    Note that $n(V,\sigma)$ is independent of choice of section by the proof of \cite[Proposition 2.3]{BW-Euler}. 
\end{remark}
In light of this, we henceforth denote the quadratically enriched Euler number of $V$ by $n(V)$ with no ambiguity. 

We now use the quadratically enriched Euler number to define a (Grothendieck--)Witt invariant, in the sense of Serre \cite{GMS}, in a case of particular interest. Given a projective variety $X$ over a field $k$ and $\beta\in H_2(X,\Z)$, let $\Mgnbar(X,\beta)$ denote the moduli space of genus $g$, $n$-marked stable maps to $X$ in class $\beta$.

\begin{exa}
    $\overline{M}_{0,n}(X,\beta)$ is smooth by \cite[1.3.2]{kontsevich-enumeration} when $X$ is convex in the sense of \cite[2.4.2]{KM-recursive}. For example, $\mathbb{P}^r_k$ is convex for any $r$ and any field $k$, see \cite[Theorem 7.1.4]{cox-katz-GW-book} and \cite{behrend-manin}. 
\end{exa}

\begin{exa}As is standard, we will write $\Mzeronbar(\bbP^r_k,d)$ to mean the moduli stack parameterizing stable maps from nodal, rational curves whose degree is $d\cdot \ell$ for $\ell$ the class of a line in $\bbP^r_k$, passing through $n$ distinguished points. 
\end{exa}

To define a (Grothendieck--)Witt invariant and Euler number for $\Mgnbar(X,\beta)$ using the pushforward of Theorem \ref{thm:pushforward}, we impose the following assumption henceforth: 

\begin{assumption}\label{assump:conditions_Mgn}
    Let $k$ be a field and let $X$ be a smooth, proper variety over $k$ such that $\overline{M}_{g,n}(X,\beta)$ is smooth, tame, and has a projective coarse moduli space given a fixed $\beta\in H_2(X, \mathbb{Z})$. 
\end{assumption}

Assumption \ref{assump:conditions_Mgn} holds, for example, over a field $k$ of characteristic 0 for $\overline{M}_{g,n}(\mathbb{P}^r_k,d)$ and for $\overline{M}_{g,n}(X,d)$ when $X\subseteq \mathbb{P}^r_k$ is a projective scheme of finite presentation, see \cite[Theorem 2.8, Section 2.5]{abramovich-oort}.  It is worth noting that $\overline{M}_{g,n}(\mathbb{P}^r_k,d)$ and $\overline{M}_{g,n}(X,d)$ have projective coarse moduli spaces over general fields by \cite{abramovich-oort}, but they may not be tame in all characteristics, see for example \cite[Proposition 5.1]{alper-slicing}. 

Let $L_1, \ldots, L_m$ be finite, separable extensions of $k$ such that $\sum_{i=1}^m [L_i:k] = n$. Let $L=\{L_1, \ldots, L_m\}$. We define the smooth, proper, tame DM stack
\begin{equation}\label{eq:twistedpi}
\twistpi\colon \twistMgnbarXb \to \Spec k
\end{equation}
as the twist of $\MgnbarXb$ given by the action of $\op{Gal}(k^{\op{sep}}/k)$ on $\MgnbarXb \times_k \Spec \overline{k}$ not only by the canonical action on $\overline{k}$ but also by simultaneously permuting the $n$ marked points in the same way as the $n$ homomorphisms $L =\prod_{i=1}^m L_i \to \overline{k}$. See \cite[4.1.4]{KLSW-rational-curves} for more information. 

The Euler numbers $n(\twistMgnbarXb)$ in $\GW(k)$ form a (Grothendieck--)Witt invariant when Assumtion \ref{assump:conditions_Mgn} holds in the following sense. Let $\Et_n$ be the functor from fields over $k$ to sets which takes a field $K$ to the set $\Et_n(K)$ of finite \'etale algebras $K \to L$ with $[L:K]=n$. Consider $\GW$ also to be a functor between these two categories taking $K$ to the set underlying the Grothendieck--Witt ring $\GW(K)$. As in Serre's work in \cite{GMS}, define a Grothendieck--Witt invariant to be a natural transformation 
\[
\Et_n \to \GW.
\]

\begin{definition}
Let $k$, $X$, and $\Mgnbar(X,\beta)$ be as in Assumption \ref{assump:conditions_Mgn}. Let $$\chi(\MgnbarXb): \Et_n \to \GW$$ be the \textit{Grothendieck--Witt invariant} taking $K \to L$ with $[L:K]=n$ to 
    \[
    n(T\twistMgnbarXb) \in \GW(K).
    \]
\end{definition}

This defines a quadratically enriched Euler number of $\twistMgnbarXb$. 

By \cite[Theorems 6.4.16, 6.5.23]{Alper-book}, $\Mgnbar$ is a smooth, proper, Deligne--Mumford stack for $2g-2+n>0$. Note then that it is also tame in characteristic $0$. We my thus similarly define the Grothendieck--Witt invariant
\[
\chi(\Mgnbar): \Et_n \to \GW
\] by taking $K \to L$ with $[L:K]=n$ to 
    \[
    n(T\twistMgnbar) \in \GW(K).
    \]

This gives a quadratic enrichment of the Euler characteristics of $\MgnbarXb$ and $\Mgnbar$. See also \cite{BRW} for a Witt-invariant description of quadratically enriched Gromov--Witten or Welschinger invariants. Euler characteristics of the moduli space of curves and stable maps are of great interest in enumerative geometry. See for example work of Chan--Faber--Galatius--Payne, Chan--Galatius--Payne, Kannan--Song, Getzler--Kapranov, and Harer--Zagier \cite{CFGP, chan-galatius-payne, kannan-song, getzler-kapranov, harer-zagier}.

\section{Quadratically Enriched Quantum K-Invariants}

\subsection{Fundamental class of relatively oriented closed subschemes}\label{Subsection:FundClassSubLNew}

Suppose that $X$ is a smooth, projective scheme over $k$ of dimension $d_X$. Let $k \subseteq L$ be a finite separable extension and let $X_L$ denote the basechange $X_L = X \times_k \Spec L$. Let $\Res^L_{k}$ denote the Weil restriction. Suppose $Z \hookrightarrow X_L$ is a closed subscheme of codimension $c$ with $Z$ smooth. Then we obtain a canonical map
\[
i:\Res^L_{k} Z \hookrightarrow \Res^L_{k} X.
\] Let $r=c[L:k]$. Suppose that $i$ is relatively oriented with respect to a line bundle $\calL'$ on $\Res^L_{k} X$, i.e., $$\omega_i\stackrel{\rho}{\cong} \calL^{\otimes 2}\otimes i^*\calL'$$ for some $\calL\to Z$. In \eqref{eq:twisted_oriented_pushforward} we can take $\calM = \calL'$ and $n=r$, and note that there is a canonical isomorphism
\[
\Bl^{\naive}(Z, i^* \calM \otimes f^*\calL'[d_i+r]) \cong \Bl^{\naive}(Z, \calO_Z)
\]
giving an oriented pushforward
 \begin{equation}\label{ioriented_pushforward_for_FundClassZ_def}
     i_*^{\orient}: \Bl^{\naive}(Z, \calO_Z) \to \Bl^{\naive}(X, \calL'[r]).
     \end{equation}
     
\begin{definition}\label{df:fundclass[Zrho]}
     Define the {\em fundamental class of ($Z$, $\rho$)} $$[Z,\rho] \in \Bl^{\naive}(\Res^L_{k} X, \calL' [r])$$ to be the oriented pushforward \eqref{ioriented_pushforward_for_FundClassZ_def} of $
    \calO_Z \times \calO_Z \to \calO_Z $ given by
     \[
     [Z,\rho] = i_*^{\orient}(\calO_Z \times \calO_Z \to \calO_Z).
     \]
    \end{definition}

See also \cite{Zhang-res} for another related and prior definition of the fundamental class for a subvariety $Z$ defined over $L$. 

\begin{exa}\label{exa:ratnl_pt_fund_class}
Consider the rational point $p=[0:0:1]$ of $\mathbb{P}^2_k$ corresponding to a closed immersion
    \[
    i: \Spec k \hookrightarrow \mathbb{P}^2.
    \]
    We will compute the fundamental class $[p]$ of $p$, $[p,\rho] = i_*^{\orient}(\calO_p\otimes\calO_p\to\calO_p)$, where $i_*^{\orient}$ is as in Definition~\ref{def:oriented_pushforward}, and $\rho$ is defined as follows. Let $\calL' = \omega_{\bbP^2}^{-1} \cong \calO(3)$ and $\calL = \calO_k$. There is an orientation of $i$ given by 
    \[
    \omega_i\cong \omega_p \otimes i^*\omega_{\bbP^2}^{-1} \cong \calO_p\otimes i^*\omega_{\bbP^2}^{-1} = \calL^{\otimes 2}\otimes i^*\calL'
    \]
    that follows from the short exact sequence 
    \[
    0\to i^*\Omega_{\bbP^2} \to \Omega_p \to \Omega_i\to 0. 
    \]
    Let $\calM = \omega_{\bbP^2} \cong \calO(-3)$. 

    Consider the bilinear form on $\Spec k$ given by multiplication, $ \calO_p\otimes \calO_p \stackrel{\mu}{\to} \calO_p$. The fundamental class of $p$ is the bilinear form on $\mathbb{P}^2$ given by the twisted, oriented pushforward of $\mu$ along \eqref{eq:twisted_oriented_pushforward},
    \begin{align}
i_*^{\orient}\colon \Bl^{\naive}\left(k,i^*\calM\otimes i^*\calL'[-2+2]\right) &\cong \Bl^{\naive}\left(k, (i^*\calM\otimes i^*\calL'\otimes \calL^{\otimes 2})[-2+2]\right) \\
&\stackrel{\rho}{\cong} \Bl^{\naive}\left(k, i^*\calM\otimes \omega_i [-2+2]\right) \stackrel{i_*}{\longrightarrow} \Bl^{\naive}\left(\bbP^2,  \calM[2]\right), \notag
\end{align}
where the first equivalence is $\mu\mapsto \mu\otimes \calL$, the second is given by the orientation $\rho$, and $i_*$ is the pushforward of Theorem \ref{thm:pushforward}. In particular, the fundamental class $[p, \rho]$ is given by the bilinear form 
\begin{equation}\label{eq:fund_class_of_pt}
    i_*\calL\otimes i_*\calL \to i_*\calL \cong i_*(i^*\calM \otimes i^*\calL'\otimes \calL^{\otimes 2}[-2+2]) \stackrel{\rho}{\cong} \calM [2]\otimes i_* \omega_i[-2] \xrightarrow{\calM[2]\otimes \Tr_i}  \calM[2]. 
\end{equation}
Recall $\omega_i\simeq i^! \calO_{\bbP^2}$. 

 The pushforward $i_*\calO_p$ is the Koszul complex $\Kos(p)$, which is resolved by 
        \begin{equation}
        \begin{tikzcd}
            0\arrow{r} & \calO(-2) \arrow{r}
            {(-yx)^T} & \calO(-1)\oplus \calO(-1)\arrow["(xy)"]{r} & \calO_{\bbP^2}\arrow{r} & 0. 
        \end{tikzcd}
    \end{equation}
Thus $i_*\calO_p\otimes i_*\calO_p\xrightarrow{i_*\mu} i_*\calO_p$ is equivalent to the bilinear form on the Koszul complex $$\Kos(p)\otimes \Kos(p) \to \Kos(p),$$ whence $i_*\calL\otimes i_*\calL \xrightarrow{i_*(\calL\otimes \mu)} i_*\calL$ is also equivalent to the bilinear form $\Kos(p)\otimes \Kos(p) \to \Kos(p)$ since $\calL = \calO_k$. Thus to describe the fundamental class $[p, \rho]$ in \eqref{eq:fund_class_of_pt}, we must understand the equivalence $\Kos(p)\cong i_*(i^*\calM\otimes i^*\calL'\otimes \calL^{\otimes 2})\stackrel{\rho}{\cong} \calM [2]\otimes i_* \omega_i[-2]$ and the map $\calM[2]\otimes \Tr_i$. 

We start with $\calM[2]\otimes \Tr_i$. Note that since $i$ is finite, 
    \[
    Ri_*\omega_i[-2] \simeq i_* \omega_i[-2] \simeq Ri_* i^! \calO_{\mathbb{P}^2}, 
    \]
    which is equivalent to 
    \begin{center}
        \begin{tikzcd}
            0\arrow{r} & \calO_{\bbP^2} \arrow{r}            {(xy)^T}& \calO(1)\oplus \calO(1)\arrow["(-yx)"]{r} & \calO(2)\arrow{r} & 0
        \end{tikzcd}
    \end{center}
    with $\calO_{\bbP^2}$ in degree 0. 
    Using this resolution, taking the trace $\Tr_i\colon Ri_* i^! \calO_{\bbP^2}\to \calO_{\bbP^2}$ is equivalent to the map on chain complexes
    \begin{center} 
    \begin{tikzcd} 
    0\arrow{r} \arrow{d} & \calO_{\bbP^2}\arrow{d}{\op{id}} \arrow{r}& \calO(1)\oplus \calO(1)\arrow{r} \arrow{d} & \calO(2) \arrow{r} \arrow{d} & 0 \arrow{d} \\
    0 \arrow{r} & \calO_{\bbP^2} \arrow{r} & 0 \arrow{r} & 0 \arrow{r} & 0 
    \end{tikzcd}
    \end{center}
    Tensoring with $\calM[2]\cong \calO(-3)[2]$, the map $\calM[2]\otimes \Tr_i\colon \calM[2]\otimes Ri_*\omega_i[-2]\to \calM[2]$ is
    \begin{equation}\label{eq:tr_tensor_M}
    \begin{tikzcd}
       0\arrow{r} \arrow{d} & \calO(-3)\arrow{d} \arrow{r}& \calO(1)^{\oplus 2}\otimes \calO(-3) \arrow{r} \arrow{d} & \calO(-1) \arrow{r} \arrow{d} & 0 \arrow{d} \\
    0 \arrow{r} & \calO(-3) \arrow{r} & 0 \arrow{r} & 0 \arrow{r} & 0 
    \end{tikzcd}
    \end{equation}
    where $\calO(-3)$ is in degree $-2$.

    We now describe the equivalence $\Kos(p)\cong i_*(i^*\calM\otimes i^*\calL'\otimes \calL^{\otimes 2})\stackrel{\rho}{\cong} \calM [2]\otimes i_* \omega_i[-2]$. 
Note 
\begin{equation}\label{eq:big_koszul_equivalence}
i_*(i^*\calM\otimes i^*\calL'\otimes \calL^{\otimes 2}) \cong \calM \otimes i_*(i^*\calL'\otimes \calL^{\otimes 2}) \stackrel{\rho}{\cong} \calM\otimes i_*\omega_i\cong \calM\otimes i_*i^!\calO_{\bbP^2},
\end{equation}
so we equivalently describe $\calM[2]\otimes i_*i^!\calO_{\bbP^2}\xrightarrow{\sim} \Kos(p)$. Consider the equivalence $$\calM[2]\otimes i_*i^!\calO_{\bbP^2} \stackrel{\sim}{\to}\Kos(p)$$ given by 
    \begin{equation}\label{eq:kos_equivalence}
        \begin{tikzcd}
        0\arrow{r} \arrow{d} & \calO(-3)\arrow{d}{z\mapsto z} \arrow{r}
        {(xy)^T}& \calO(1)^{\oplus 2} \otimes \calO(-3) \arrow{r}{(-yx)} \arrow{d}{(a,b) \mapsto (-bz, az)} & \calO(-1) \arrow{r} \arrow{d}{z\mapsto z } & 0 \arrow{d} \\
    0 \arrow{r} & \calO(-2) \arrow{r}
    {(-yx)^T}& \calO(-1)\oplus\calO(-1) \arrow{r} & \calO_{\bbP^2} \arrow{r}{(xy)} & 0 
        \end{tikzcd}
    \end{equation}
    
    In total, substituting \eqref{eq:tr_tensor_M} and \eqref{eq:kos_equivalence} into \eqref{eq:fund_class_of_pt} and using $i_*\calL\cong \Kos(p)$ shows that the fundamental class $[p, \rho]$, 
    \[
    i_*\calL\otimes i_*\calL \to i_*\calL \cong i_*(i^*\calM \otimes i^*\calL'\otimes \calL^{\otimes 2}[-2+2]) \stackrel{\rho}{\cong} \calM [2]\otimes i_* \omega_i[-2] \xrightarrow{\calM[2]\otimes \Tr_i}  \calM[2]
    \]
    is represented by the bilinear form 
   \begin{equation}
    \begin{tikzcd}
    \Kos(p)\arrow[r, phantom, "\otimes"] & \Kos(p) \arrow{r}{\mu}  & \Kos(p) \arrow[r, phantom, "\simeq"] & \calM[2]\otimes i_*\omega_i[-2]\arrow{r}{\calM[2]\otimes \Tr_i} & \calM[2] \\ 
    0 \arrow{d} & 0 \arrow{d} & 0 \arrow{d} & 0\arrow{d} & 0\arrow{d}\\ 
    \calO(-2)\arrow{d} & \calO(-2)\arrow{d} & \calO(-2) \arrow{d} &  \calO(-3) \arrow{d} & \calO(-3) \arrow{d}\\
    \calO(-1)^{\oplus 2} \arrow{d} \arrow[r, phantom, "\otimes"] & \calO(-1)^{\oplus 2} \arrow{d}\arrow{r}{\mu} & \calO(-1)^{\oplus 2} \arrow{d} \arrow[r, phantom, "\simeq"]  & \begin{array}{c}
    \calO(1)^{\oplus 2}  \\
    \otimes\,\calO(-3)
\end{array}\arrow{d} \arrow{r}{\calM[2]\otimes \Tr_i} & 0 \arrow{d}\\ 
    \calO_{\bbP^2} \arrow{d} & \calO_{\bbP^2} \arrow{d} & \calO_{\bbP^2} \arrow{d} & \calO(-1)\arrow{d} & 0 \arrow{d}\\
    0 & 0 & 0 & 0 & 0\\ 
    \end{tikzcd}
\end{equation}
    in $\Bl^{\naive}(\bbP^2, \calM[2]) = \Bl^{\naive}(\bbP^2, \omega_{\bbP^2}[2])$, as desired. 
    \end{exa}

\begin{exa}\label{exa:non_ratnl_pt_fund_class}
    Let $L$ be a finite, separable extension of $k$. Consider a point $p: \Spec L \to \mathbb{P}^2_L$. Write $Y=\Res_{L/k}\bbP^2_L$. Let 
    \[
    i: \Spec k \to Y
    \]
    be the Weil restriction of $p$ to $k$. Note that $i$ is relatively oriented with respect to the line bundle $\calL' = \omega_{Y}$ on $Y$ by the isomorphism
    \[
    \rho: \omega_i\xrightarrow{\cong} \omega_{k}\otimes i^*\calL' \xrightarrow{\cong} \calO_k^{\otimes 2}\otimes i^*\calL' 
    \] defining a pushforward 
    \[
    i_*^{\orient}: \Bl^{\naive}(Z, \calO_Z) \to \Bl^{\naive}(Y, \omega_{Y}[2[L:k]])
    \] by \eqref{ioriented_pushforward_for_FundClassZ_def}. We thus have fundamental class of $p$ in $\Bl^{\naive}(Y, \omega_{Y}[2[L:k]])$ defined 
    \[
    [p, \rho]= i_*^{\orient}(\calO_k \otimes \calO_k \to \calO_k).
    \] This fundamental class can be represented by an explicit zig-zag of complexes similarly to Example \ref{exa:ratnl_pt_fund_class}. 
\end{exa}

\subsection{Definition of enriched invariants in the smooth case }
We present a definition of a quadratically enriched version of Y.P. Lee's quantum $K$-invariants, see \cite[Section 4.2]{lee-quantum-invariants}, when $\MgnbarXb$ is oriented and Assumption \ref{assump:conditions_Mgn} holds. We do not prove any claims about existence of orientations here, focusing only on a definition of the enriched invariants under (meaningful) assumptions on the existence of orientations. Note that in \cite{KLSW-rel-orientation}, a relative orientation was constructed on a large locus of $\overline{M}_{0,n}(\mathbb{P}^2,d)$, but not on the entire stack. 

As in the previous section, let $X$ be smooth and projective over a field $k$ such that $\MgnbarXb$ is smooth. let $L_1, \ldots, L_m$ be finite, separable extensions of $k$ such that $\sum_{i=1}^m [L_i:k] = n$. Let $L=\{L_1, \ldots, L_m\}$. Let
\begin{equation}
\twistpi\colon \twistMgnbarXb \to \Spec k.
\end{equation} be the twisted moduli stack of stable maps from \eqref{eq:twistedpi}. The total evaluation map $\ev: \MgnbarXb \to X^n$ taking a stable map $(f:C \to X, p_1,\ldots,p_n)$ to $(f(p_1),\ldots, f(p_n))$ may also be twisted, resulting in maps
\begin{equation}\label{eq:twistedevals}
    \twistev_i\colon \twistMgnbarXb \to \Res_{L_i/k} X_{L_i} \quad \text{and} \quad \twistev\colon \twistMgnbarXb\to \prod_{j=1}^m \Res_{L_j/k}X_{L_j}
\end{equation}
as in \cite[4.1.4]{KLSW-rational-curves}. To see this, note that the restriction of scalars $$\prod_{j=1}^m \Res_{L_j/k}X_{L_j} \cong \Res_{L/k} X$$ for $L = \prod_{j=1}^m L_j$ is isomorphic to the twist of $X^m$, where the Galois action on $X^n$ permutes the $m$ factors of $X$ with the same action as on the $m $ ring homomorphisms $L \to \overline{k}$ over $k$. It follows that the map $\ev \otimes_k \overline{k}$ is equivariant for the twisted action.

Let $Z_j \hookrightarrow X_{L_j}$ be smooth, closed subschemes of codimension $c_j$ for $j=1,\ldots,m$. As in Section \ref{Subsection:FundClassSubLNew}, define $r_j = c_j[L_j:k]$ and suppose that each 
\[
i_j: \Res^L_k Z \to \Res^L_k X
\] is relatively oriented by $\rho_j: \omega_{i_j} \to \calL_j^{\otimes 2} \otimes i_j^* \calL'_j$ with respect to a line bundle $\calL'_j$ on $\Res^L_k X$. By Definition~\ref{df:fundclass[Zrho]}, we then have $[Z_j,\rho_j] \in \Bl^{\naive}(X, \calL'_j[r_j])$. See Examples \ref{exa:ratnl_pt_fund_class} and \ref{exa:non_ratnl_pt_fund_class} when $Z_j$ is a point.

Now suppose $\sum_{j=1}^m r_j = \dim \twistMgnbarXb=d$. Assume there is an orientation of $\twistpi$ relative to $(\twistev)^* \boxtimes_{j=1}^m \calL'_j $ given by 
\begin{equation}\label{eq:orient_twist_omega}
\omega_{\twistpi} \cong \left((\twistev)^* \boxtimes_{j=1}^m \calL'_j  \right)\otimes \calL^2 
\end{equation}
for some $\calL\to \twistMgnbarXb$. Then we have an oriented pushforward 
\begin{equation}\label{eq:oriented_twistpi}
(\twistpi)^{\orient}_*\colon \Bl^{\naive}\left(\twistMgnbarXb, \left((\twistev)^* \boxtimes_{j=1}^m \calL'_j\right) [d]\right) \to \Bl^{\naive}(k, \calO_k). 
\end{equation}

\begin{definition}\label{defn:smooth_quantum_K}
Assume $X$, $k$, $\beta$, and $\Mgnbar(X,\beta)$ are as in Assumption \ref{assump:conditions_Mgn} and assume $\MgnbarXb$ is oriented. 
Define the $n$-marked, genus $g$, \emph{quadratically enriched quantum $K$-invariant} of $X$ to be 
    \begin{equation}
        \langle Z_1, \ldots, Z_m\rangle^{X_{/k},L}_{g,\beta} :=  \theta\circ(\twistpi)^\orient_*\left(\cup_{i=1}^m (\twistev_i)^*[Z_i]  \right) 
    \end{equation}
    in $\GW(k)$, where $\theta$ is as in Definition \ref{df:BlnaivektoGW}, $(\twistpi)^\orient_*$ is the oriented pushforward of $\twistpi$ of \eqref{eq:oriented_twistpi}, and $(\twistev_i)^*$ is the pullback (see \eqref{eq:pullbackBlnaive}). 
\end{definition}

This defines the first quadratically enriched quantum $K$-invariant of a smooth, projective scheme over a general field $k$ when $\MgnbarXb$ is oriented.

\newpage 

\bibliographystyle{amsalpha}
\bibliography{bib_equiv_GW}

@article {hall-rydh,
    AUTHOR = {Hall, Jack and Rydh, David},
     TITLE = {Perfect complexes on algebraic stacks},
   JOURNAL = {Compos. Math.},
  FJOURNAL = {Compositio Mathematica},
    VOLUME = {153},
      YEAR = {2017},
    NUMBER = {11},
     PAGES = {2318--2367},
      ISSN = {0010-437X,1570-5846},
   MRCLASS = {14F05 (14A20 18E30)},
  MRNUMBER = {3705292},
MRREVIEWER = {Stefan\ Schr\"oer},
       DOI = {10.1112/S0010437X17007394},
       URL = {https://doi.org/10.1112/S0010437X17007394},
}

@article{27lines,
   title={An arithmetic count of the lines on a smooth cubic surface},
   volume={157},
   ISSN={1570-5846},
   url={http://dx.doi.org/10.1112/S0010437X20007691},
   DOI={10.1112/s0010437x20007691},
   number={4},
   journal={Compositio Mathematica},
   publisher={Wiley},
   author={Kass, Jesse and Wickelgren, Kirsten},
   year={2021},
   month=apr, pages={677–709} }

@article{levineaspects,
   title={Aspects of Enumerative Geometry with Quadratic Forms},
   volume={25},
   ISSN={1431-0643},
   url={http://dx.doi.org/10.4171/dm/797},
   DOI={10.4171/dm/797},
   journal={Documenta Mathematica},
   publisher={European Mathematical Society - EMS - Publishing House GmbH},
   author={Levine, Marc},
   year={2020},
   pages={2179–2239} }

@article{KLSW-rational-curves, 
   title={A quadratically enriched count of rational curves},
   number={},
   journal={arXiv: 2307.01936},
   author={J. Kass and M. Levine and J. Solomon and K. Wickelgren},
   year={2023},
   pages={0-47} }

@article {BW-Euler,
    AUTHOR = {Bachmann, Tom and Wickelgren, Kirsten},
     TITLE = {Euler classes: six-functors formalism, dualities, integrality
              and linear subspaces of complete intersections},
   JOURNAL = {J. Inst. Math. Jussieu},
  FJOURNAL = {Journal of the Institute of Mathematics of Jussieu. JIMJ.
              Journal de l'Institut de Math\'ematiques de Jussieu},
    VOLUME = {22},
      YEAR = {2023},
    NUMBER = {2},
     PAGES = {681--746},
      ISSN = {1474-7480,1475-3030},
   MRCLASS = {14F42 (19E15 55R40)},
  MRNUMBER = {4557905},
MRREVIEWER = {Satoshi\ Mochizuki},
       DOI = {10.1017/S147474802100027X},
       URL = {https://doi-org.proxy.lib.duke.edu/10.1017/S147474802100027X},
}

@article {CH09,
    AUTHOR = {Calm\`es, Baptiste and Hornbostel, Jens},
     TITLE = {Tensor-triangulated categories and dualities},
   JOURNAL = {Theory Appl. Categ.},
  FJOURNAL = {Theory and Applications of Categories},
    VOLUME = {22},
      YEAR = {2009},
     PAGES = {No. 6, 136--200},
      ISSN = {1201-561X},
   MRCLASS = {18D10},
  MRNUMBER = {2520968},
}

@article{KM-recursive, 
   title={Gromov-Witten classes, quantum cohomology, and enumerative geometry},
   number={3},
    volume={164},
   journal={Comm. Math. Phys.},
   author={M. Kontsevich and Yu. Manin},
   year={1994},
   pages={525-562} }

@article{KW-local-degree, 
   title={The class of Eisenbud-Khimshiashvili-Levine is the local $\mathbb{A}^1$-Brouwer degree},
    volume={168},
    number={1},
   journal={Duke Math Journal},
   author={J. Kass and K. Wickelgren},
   year={2019},
   pages={429–469} }

@book{solomon-thesis,
	address = {Ann Arbor, MI},
	author = {J.P. Solomon},
	publisher = {ProQuest LLC},
    series = {Ph.D. Thesis, Massachusetts Institute of Technology}, 
	title = {Intersection theory on the moduli space of holomorphic curves with {L}agrangian boundary conditions},
	year = {2006}}

@article{Hoyois-lefschetz, 
   title={A quadratic refinement of the Grothendieck–Lefschetz–Verdier trace formula},
    volume={14},
    number = {6},
   journal={Algebr. Geom. Topol.},
   author={M. Hoyois},
   year={2014}, 
    pages = {3603-3658} 
}

@article{SW-4-lines, 
   title={An Arithmetic Count of the Lines Meeting Four Lines in $\mathbb{P}^3$},
    volume={374},
    number = {5},
   journal={Trans. Amer. Math. Soc.},
   author={P. Srinivasan and K. Wickelgren},
   year={2021}, 
    pages = {3427--3451} 
}

@article{quadratic-tropical, 
   title={Arithmetic counts of tropical plane curves and their properties},
   journal={arXiv:2309.12586, Accepted in Advances in Geometry},
   author={A. Jaramillo Puentes and H. Markwig and F. Rohrle and S. Pauli},
   year={2023}, 
    pages = {1-30} 
}

@article{quadratic-twisted-cubics, 
   title={Quadratic Counts of Twisted Cubics},
   journal={arXiv: 2206.05729, Accepted in Trends in Mathematics},
   author={M. Levine and S. Pauli},
   year={2022},
    pages = {1-60}
}

@article {Hoyois6ff,
    AUTHOR = {M. Hoyois},
     TITLE = {The six operations in equviariant motivic homotopy theory},
   JOURNAL = {Advances in Mathematics},
    VOLUME = {305},
      YEAR = {2017},
     PAGES = {197-279},
      ISSN = {0001-8708},
       URL = {https://doi.org/10.1016/j.aim.2016.09.031},
}

@misc{Zhang-res, 
   title={Some Enumerative {W}itt Invariants And Twisted {P}ontryagin Classes},
   journal={In preparation},
   author={Ruijie (Jared) Zhang},
   pages = {},
   year={2026},
}

@article {khan-ravi,
    AUTHOR = {Khan, Adeel A. and Ravi, Charanya},
     TITLE = {Generalized cohomology theories for algebraic stacks},
   JOURNAL = {Adv. Math.},
  FJOURNAL = {Advances in Mathematics},
    VOLUME = {458},
      YEAR = {2024},
     PAGES = {Paper No. 109975, 104},
      ISSN = {0001-8708,1090-2082},
   MRCLASS = {14A20 (14C15 14F42 19E08 55N20)},
  MRNUMBER = {4811546},
       DOI = {10.1016/j.aim.2024.109975},
       URL = {https://doi.org/10.1016/j.aim.2024.109975},
}

@incollection {kontsevich-enumeration,
    AUTHOR = {Kontsevich, Maxim},
     TITLE = {Enumeration of rational curves via torus actions},
 BOOKTITLE = {The moduli space of curves ({T}exel {I}sland, 1994)},
    SERIES = {Progr. Math.},
    VOLUME = {129},
     PAGES = {335--368},
 PUBLISHER = {Birkh\"auser Boston, Boston, MA},
      YEAR = {1995},
      ISBN = {0-8176-3784-2},
   MRCLASS = {14N10 (14D22 14L30)},
  MRNUMBER = {1363062},
MRREVIEWER = {Anatoly\ Libgober},
       DOI = {10.1007/978-1-4612-4264-2\_12},
       URL = {https://doi.org/10.1007/978-1-4612-4264-2_12},
}

@incollection {abramovich-oort,
    AUTHOR = {Abramovich, Dan and Oort, Frans},
     TITLE = {Stable maps and {H}urwitz schemes in mixed characteristics},
 BOOKTITLE = {Advances in algebraic geometry motivated by physics ({L}owell,
              {MA}, 2000)},
    SERIES = {Contemp. Math.},
    VOLUME = {276},
     PAGES = {89--100},
 PUBLISHER = {Amer. Math. Soc., Providence, RI},
      YEAR = {2001},
      ISBN = {0-8218-2810-X},
   MRCLASS = {14H10 (14A20)},
  MRNUMBER = {1837111},
MRREVIEWER = {Marian\ Aprodu},
       DOI = {10.1090/conm/276/04513},
       URL = {https://doi.org/10.1090/conm/276/04513},
}

@article{lev,
    AUTHOR = {Levchenko, Denis},
     TITLE = {Serre duality for tame {D}eligne-{M}umford stacks},
   JOURNAL = {Res. Math. Sci.},
  FJOURNAL = {Research in the Mathematical Sciences},
    VOLUME = {9},
      YEAR = {2022},
    NUMBER = {4},
     PAGES = {Paper No. 67, 5},
      ISSN = {2522-0144,2197-9847},
   MRCLASS = {14A20 (14F06)},
  MRNUMBER = {4514209},
MRREVIEWER = {Giulio\ Bresciani},
       DOI = {10.1007/s40687-022-00367-7},
       URL = {https://doi.org/10.1007/s40687-022-00367-7},
}

@article{nironi, 
   title={Grothendieck Duality for Deligne-Mumford Stacks},
   journal={arXiv:0811.1955},
   author={F. Nironi},
   year={2008}, 
    pages={1-35}
}

@book {cox-katz-GW-book,
    AUTHOR = {Cox, David A. and Katz, Sheldon},
     TITLE = {Mirror symmetry and algebraic geometry},
    SERIES = {Mathematical Surveys and Monographs},
    VOLUME = {68},
 PUBLISHER = {American Mathematical Society, Providence, RI},
      YEAR = {1999},
     PAGES = {xxii+469},
      ISBN = {0-8218-1059-6},
   MRCLASS = {14J32 (14-02 14M25 14N10 14N35 32G81 32J81 32Q25)},
  MRNUMBER = {1677117},
MRREVIEWER = {Andreas\ Gathmann},
       DOI = {10.1090/surv/068},
       URL = {https://doi.org/10.1090/surv/068},
}

@article {behrend-manin,
    AUTHOR = {Behrend, K. and Manin, Yu.},
     TITLE = {Stacks of stable maps and {G}romov-{W}itten invariants},
   JOURNAL = {Duke Math. J.},
  FJOURNAL = {Duke Mathematical Journal},
    VOLUME = {85},
      YEAR = {1996},
    NUMBER = {1},
     PAGES = {1--60},
      ISSN = {0012-7094,1547-7398},
   MRCLASS = {14D20 (14C25 14D22)},
  MRNUMBER = {1412436},
MRREVIEWER = {Barbara\ Fantechi},
       DOI = {10.1215/S0012-7094-96-08501-4},
       URL = {https://doi.org/10.1215/S0012-7094-96-08501-4},
}

@book {laumon-moret-bailly,
    AUTHOR = {Laumon, G\'erard and Moret-Bailly, Laurent},
     TITLE = {Champs alg\'ebriques},
    SERIES = {Ergebnisse der Mathematik und ihrer Grenzgebiete. 3. Folge. A
              Series of Modern Surveys in Mathematics [Results in
              Mathematics and Related Areas. 3rd Series. A Series of Modern
              Surveys in Mathematics]},
    VOLUME = {39},
 PUBLISHER = {Springer-Verlag, Berlin},
      YEAR = {2000},
     PAGES = {xii+208},
      ISBN = {3-540-65761-4},
   MRCLASS = {14A20 (14D20)},
  MRNUMBER = {1771927},
MRREVIEWER = {Dan\ Edidin},
}

@article{KLSW-rel-orientation, 
   title={A relative orientation for the moduli space of stable maps to a del Pezzo surface},
   journal={arXiv:2307.01941, to appear in Algebraic Geometry},
   author={J. Kass and M. Levine and J. Solomon and K. Wickelgren},
   year={2023}, 
    pages={1-81}
}

@book {GMS,
    AUTHOR = {Garibaldi, Skip and Merkurjev, Alexander and Serre,
              Jean-Pierre},
     TITLE = {Cohomological invariants in {G}alois cohomology},
    SERIES = {University Lecture Series},
    VOLUME = {28},
 PUBLISHER = {American Mathematical Society, Providence, RI},
      YEAR = {2003},
     PAGES = {viii+168},
      ISBN = {0-8218-3287-5},
   MRCLASS = {11E72 (12G05)},
  MRNUMBER = {1999383},
MRREVIEWER = {Gr\'egory\ Berhuy},
       DOI = {10.1090/ulect/028},
       URL = {https://doi.org/10.1090/ulect/028},
}

@book {Alper-book,
    AUTHOR = {Alper, Jared},
     TITLE = {Stacks and Moduli},
    SERIES = {},
    VOLUME = {},
 PUBLISHER = {https://sites.math.washington.edu/~jarod/moduli.pdf},
      YEAR = {2026},
     PAGES = {i-679},
}

@article{kannan-song, 
   title={Graph enumeration for moduli spaces of curves and maps},
   journal={arXiv:2509.18298},
   author={S. Kannan and J. Song},
   year={2025}, 
    pages={1-58}
}

@article {getzler-kapranov,
    AUTHOR = {Getzler, E. and Kapranov, M. M.},
     TITLE = {Modular operads},
   JOURNAL = {Compositio Math.},
  FJOURNAL = {Compositio Mathematica},
    VOLUME = {110},
      YEAR = {1998},
    NUMBER = {1},
     PAGES = {65--126},
      ISSN = {0010-437X,1570-5846},
   MRCLASS = {18C15 (08A02 14H10 57M50)},
  MRNUMBER = {1601666},
MRREVIEWER = {Alexandre\ I.\ Kabanov},
       DOI = {10.1023/A:1000245600345},
       URL = {https://doi.org/10.1023/A:1000245600345},
}

@article {harer-zagier,
    AUTHOR = {Harer, J. and Zagier, D.},
     TITLE = {The {E}uler characteristic of the moduli space of curves},
   JOURNAL = {Invent. Math.},
  FJOURNAL = {Inventiones Mathematicae},
    VOLUME = {85},
      YEAR = {1986},
    NUMBER = {3},
     PAGES = {457--485},
      ISSN = {0020-9910,1432-1297},
   MRCLASS = {32G15 (14H15 57R20)},
  MRNUMBER = {848681},
MRREVIEWER = {William\ Abikoff},
       DOI = {10.1007/BF01390325},
       URL = {https://doi.org/10.1007/BF01390325},
}

@article {chan-galatius-payne,
    AUTHOR = {Chan, Melody and Galatius, S\o ren and Payne, Sam},
     TITLE = {Tropical curves, graph complexes, and top weight cohomology of
              {$\mathcal{M}_g$}},
   JOURNAL = {J. Amer. Math. Soc.},
  FJOURNAL = {Journal of the American Mathematical Society},
    VOLUME = {34},
      YEAR = {2021},
    NUMBER = {2},
     PAGES = {565--594},
      ISSN = {0894-0347,1088-6834},
   MRCLASS = {14T20 (14H10)},
  MRNUMBER = {4280867},
MRREVIEWER = {Paul\ A.\ Hacking},
       DOI = {10.1090/jams/965},
       URL = {https://doi.org/10.1090/jams/965},
}

@article {CFGP,
    AUTHOR = {Chan, Melody and Faber, Carel and Galatius, S\o ren and Payne,
              Sam},
     TITLE = {The {$S_n$}-equivariant top weight {E}uler characteristic of
              {$\mathcal{M}_{g,n}$}},
   JOURNAL = {Amer. J. Math.},
  FJOURNAL = {American Journal of Mathematics},
    VOLUME = {145},
      YEAR = {2023},
    NUMBER = {5},
     PAGES = {1549--1585},
      ISSN = {0002-9327,1080-6377},
   MRCLASS = {14H10 (05A15 18A25 18B40 55R40)},
  MRNUMBER = {4647653},
MRREVIEWER = {Lidia\ Stoppino},
       DOI = {10.1353/ajm.2023.a907705},
       URL = {https://doi.org/10.1353/ajm.2023.a907705},
}

@article{BRW, 
   title={Welschinger--Witt invariants},
   journal={arXiv:2509.04172},
   author={E. Brugall{\'e} and J. Rau and K. Wickelgren},
   year={2025}, 
    pages={1-49}
}

@article{bb-history, 
   title={The Evolution of Enumerative Geometry: A Narrative from Classical Problems to Enriched Invariants},
   journal={arXiv:2510.04275},
   author={C. Bethea and T. Brazelton},
   year={2025}, 
    pages={1-24}
}

@article{bethea-ravi, 
   title={Local multiplicities for an equivariantly enriched non-transverse B{\'e}zout's theorem},
   journal={arXiv:2604.00289},
   author={C. Bethea and C. Ravi},
   year={2026}, 
    pages={1-31}
}

@article {levine-raskit,
    AUTHOR = {Levine, Marc and Raksit, Arpon},
     TITLE = {Motivic {G}auss-{B}onnet formulas},
   JOURNAL = {Algebra Number Theory},
  FJOURNAL = {Algebra \& Number Theory},
    VOLUME = {14},
      YEAR = {2020},
    NUMBER = {7},
     PAGES = {1801--1851},
      ISSN = {1937-0652,1944-7833},
   MRCLASS = {14F42 (55N20 55N35)},
  MRNUMBER = {4150251},
MRREVIEWER = {Matthias\ Wendt},
       DOI = {10.2140/ant.2020.14.1801},
       URL = {https://doi.org/10.2140/ant.2020.14.1801},
}

@article {ayoub-1,
    AUTHOR = {Ayoub, Joseph},
     TITLE = {Les six op\'erations de {G}rothendieck et le formalisme des
              cycles \'evanescents dans le monde motivique. {I}},
   JOURNAL = {Ast\'erisque},
  FJOURNAL = {Ast\'erisque},
    NUMBER = {314},
      YEAR = {2007},
     PAGES = {x+466},
      ISSN = {0303-1179,2492-5926},
      ISBN = {978-2-85629-244-0},
   MRCLASS = {14F20 (14C25 14F42 18A40 18F10 18F20 18G55 19E15)},
  MRNUMBER = {2423375},
MRREVIEWER = {Christian\ Haesemeyer},
}

@book{hartshorne_residues_1966,
    address = {Berlin, Heidelberg},
    series = {Lecture {Notes} in {Mathematics}},
    title = {Residues and {Duality}},
    volume = {20},
    copyright = {http://www.springer.com/tdm},
    isbn = {978-3-540-03603-6 978-3-540-34794-1},
    url = {http://link.springer.com/10.1007/BFb0080482},
    doi = {10.1007/BFb0080482},
    language = {en},
    urldate = {2025-04-03},
    publisher = {Springer Berlin Heidelberg},
    author = {Hartshorne, Robin},
    year = {1966},
}

@article {lee-quantum-invariants,
    AUTHOR = {Lee, Y.-P.},
     TITLE = {Quantum {$K$}-theory. {I}. {F}oundations},
   JOURNAL = {Duke Math. J.},
  FJOURNAL = {Duke Mathematical Journal},
    VOLUME = {121},
      YEAR = {2004},
    NUMBER = {3},
     PAGES = {389--424},
      ISSN = {0012-7094,1547-7398},
   MRCLASS = {14N35 (19E08 53D45 55N15)},
  MRNUMBER = {2040281},
MRREVIEWER = {Andrew\ Kresch},
       DOI = {10.1215/S0012-7094-04-12131-1},
       URL = {https://doi.org/10.1215/S0012-7094-04-12131-1},
}

@article {alper-slicing,
    AUTHOR = {Alper, Jarod},
     TITLE = {Computing invariants via slicing groupoids: {G}el'fand
              {M}ac{P}herson, {G}ale and positive characteristic stable
              maps},
   JOURNAL = {Math. Nachr.},
  FJOURNAL = {Mathematische Nachrichten},
    VOLUME = {285},
      YEAR = {2012},
    NUMBER = {5-6},
     PAGES = {562--579},
      ISSN = {0025-584X,1522-2616},
   MRCLASS = {14L24 (13A50 14D23 14L30 20L05)},
  MRNUMBER = {2902833},
MRREVIEWER = {Dmitry\ A.\ Timash\"ev},
       DOI = {10.1002/mana.201000058},
       URL = {https://doi.org/10.1002/mana.201000058},
}

\end{document}